\titleformat*{\section}{\normalsize \bfseries}
\newtheorem{theorem}{Theorem}
\newtheorem{lemma}{Lemma}
\newcommand{\beq}{\begin{equation}}
	\newcommand{\eeq}{\end{equation}}
\newcommand{\Cc}{\mathbb{C}}
\newcommand{\Pp}{\mathbb{P}}
\newcommand{\ra}{\rightarrow}
\newcommand{\N}{\mathbb{N}}
\newcommand{\ir}{\textnormal{Int}}
\newcommand{\Zz}{\mathfrak{Z}}
\theoremstyle{definition}
\newtheorem{definition}{Definition}[section]
\begin{document}

		\title{Almost optimal polynomial approximation on convex sets in $\mathbb{C}$}
		\author{Liudmyla Kryvonos}
		\date{}
		\maketitle 
		\thispagestyle{empty}	
		
		\begin{abstract} 
			For a function $f$, continuous on a compact convex set $K$ and analytic in its interior we construct a sequence of almost optimal polynomials that converge with a geometric rate at points of analyticity of $f$.
			
		\end{abstract}
	
	\textit{\small MSC:} {\small 30E10 }
	
	\textit{\small Keywords:} {\small Polynomial approximation, Near-best approximation, Convex sets}
	
	\section{Introduction}
	
	Let $K\subset \mathbb{C}$ be a compact set with connected complement $\Omega:=\mathbb{\overline{C}}\backslash K$, where $\overline{\mathbb{C}}:=\mathbb{C}\cup \{\infty\}$ is the extended complex plane. The set of continuous functions on $K$ that are analytic in the interior $\ir K$ of $K$ will be denoted by $A(K)$.
	Let $\Pp_{n}$, $n\in \mathbb{N}_{0}:=\{0,1,2,...\}$ be the class of complex polynomials of degree at most $n$.
	
	The rate of the best uniform approximation of a function $f \in A(K)$ by polynomials of degree  at most $n$ we denote by
	\beq \label{best}
	E_{n}(f) = E_{n}(f, K) := \underset{P_{n} \in \Pp_{n}}{\inf}\|f - P_{n}\|_{K}.                                                         
	\eeq	
	Here $\|\cdotp\|_{K}$ means the supremum norm over $K$. 
	
By almost optimal, or "near-best" approximants $p_{n}$ to a function $f \in A(K)$ we call polynomials, such that
$$
\|f - p_{n}\|_{K} \leq C E_{n}(f), \qquad n \in \N
$$  
with a fixed $C>1$.
	 
		One of the starting points for the study of almost optimal polynomials traces back to the work by Shirokov \cite{ShirokovDocl}. It was pointed out that for a $D$-approximation, first introduced by Dzjadyk (\cite{Dz1}, \cite{Dz2}), the rate of decay of the error strictly inside a continuum can be $c n^{-M}$, with an arbitrary fixed $M$.
		
		Later on, the existence of almost optimal polynomials, possible rate of convergence inside $K$ and its dependence on the geometric properties of $K$ were studied in works by Saff and Totik \cite{AppPieceFunct}, \cite{BehaviorBestUnApp}, Maimeskul \cite{Maimeskul1}, \cite{Maimeskul}, Shirokov and Totik \cite{AppOnTheBoundAndInside}, Kryvonos \cite{Kryv2}, \cite{Kryv1}. 
	
	   In particular, it was proved by Saff and Totik \cite{BehaviorBestUnApp} that if $K$ is bounded by an analytic curve, then there are almost optimal approximants $p_{n}$ such that, for every compact set $E \subset \ir K$, 
	   $$
	   |f(z) - p_{n}(z)| \leq C e^{-\beta n} E_{n}(f), \;\;\; z \in E,
	   $$
	where  $\beta >0$, $C\geq1$ are independent of $f$. In \cite{Kryv1} it was shown that the same behavior holds for almost optimal harmonic polynomials. 
	
	Our primary motivation for this paper is to extend the family of compact sets for which the geometric convergence of "near-best" approximants is achieved whenever it is permitted.
	Using the rapidly decreasing polynomials introduced by Shirokov \cite{Shirokov}, we prove the following.

	\begin{theorem} \label{th1}
		Let $K \subset \Cc$ be a compact convex set with nonempty interior $\ir K$. Then for any $f \in A(K)$ there exists a sequence of "near-best" polynomials $\{p_{n}\}_{1}^{\infty}$, such that
		$$
		\underset{n \ra \infty}{\overline{\lim}} \|f - p_{n}\|_{E} \;e^{cn} =0
		$$
		holds for any compact set $E \subset \ir K$, with $c=c(E)$.
	\end{theorem}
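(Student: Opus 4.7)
The plan is to modify the polynomial of best approximation $P_n$ of degree $n$ (so that $\|f-P_n\|_K=E_n(f)$) by adding a polynomial correction $R_n$ of comparable degree, setting $p_n:=P_n+R_n$. The correction $R_n$ is designed so that $\|R_n\|_K=O(E_n(f))$, which ensures $p_n$ is near-best on $K$, and so that $R_n$ reproduces the residual $h_n:=f-P_n$ on the fixed interior compact $E\subset\ir K$ up to a geometrically small error, giving the rapid decay. Since $h_n\in A(K)$ with $\|h_n\|_K\le E_n(f)$ is analytic in $\ir K\supset E$, and since $\partial K$ is a rectifiable Jordan curve by convexity of $K$, the Cauchy representation
\begin{equation*}
h_n(z)=\frac{1}{2\pi i}\oint_{\partial K}\frac{h_n(\zeta)}{\zeta-z}\,d\zeta
\end{equation*}
holds at every $z\in\ir K$, so the task reduces to approximating the Cauchy kernel $1/(\zeta-z)$ by a polynomial in $z$ that is geometrically close to it on $E$ while being controlled on $\partial K$.

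The polynomial kernel is furnished by Shirokov's rapidly decreasing polynomials \cite{Shirokov}. For a convex compact $K$ and each $\zeta\in\partial K$, there is a polynomial $V_{n,\zeta}$ of degree $n$ satisfying $V_{n,\zeta}(\zeta)=1$, $\|V_{n,\zeta}\|_K\le C$ uniformly in $n$ and $\zeta$, and $|V_{n,\zeta}(z)|\le Ce^{-cn}$ for $z\in E$; the convexity of $K$ is precisely what enables the linear exponent in $n$ with $c=c(E)>0$. Because $V_{n,\zeta}(\zeta)=1$, the quotient $(1-V_{n,\zeta}(z))/(\zeta-z)$ is, for each fixed $\zeta$, a polynomial in $z$ of degree $n-1$. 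Defining
\begin{equation*}
R_n(z):=\frac{1}{2\pi i}\oint_{\partial K}h_n(\zeta)\,\frac{1-V_{n,\zeta}(z)}{\zeta-z}\,d\zeta,
\end{equation*}
subtracting from the Cauchy formula above, we obtain, for $z\in E$,
\begin{equation*}
h_n(z)-R_n(z)=\frac{1}{2\pi i}\oint_{\partial K}\frac{h_n(\zeta)V_{n,\zeta}(z)}{\zeta-z}\,d\zeta,
\end{equation*}
which, using the uniform decay of $V_{n,\zeta}(z)$ on $E$ and the positive distance $\operatorname{dist}(E,\partial K)$, is bounded in modulus by a constant times $e^{-cn}E_n(f)$. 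Since $E_n(f)\to0$ for $f\in A(K)$ (the complement $\Om$ being connected), this yields $\|f-p_n\|_E\,e^{c'n}\to0$ for every $c'<c$, as required.

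The principal obstacle is the bound $\|R_n\|_K=O(E_n(f))$, equivalent by the maximum modulus principle for polynomials to $\|R_n\|_{\partial K}=O(E_n(f))$. At a boundary point $z_0\in\partial K$ the integrand $(1-V_{n,\zeta}(z_0))/(\zeta-z_0)$ is no longer uniformly small: the denominator $\zeta-z_0$ vanishes as $\zeta\to z_0$, and one must rely on the vanishing of the numerator at $\zeta=z_0$ (since $V_{n,z_0}(z_0)=1$) to cancel the singularity uniformly in $n$. Making this cancellation quantitative is the delicate part of the argument and is where the fine geometric structure of Shirokov's construction for convex $K$ is essential, together with standard estimates for the Cauchy singular integral on the Lipschitz curve $\partial K$ acting on continuous data. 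Once $\|R_n\|_K=O(E_n(f))$ is established, assembling $p_n=P_n+R_n$ produces a polynomial of degree $n$ meeting both the near-best estimate on $K$ and the $o(e^{-cn})$ decay on $E$.
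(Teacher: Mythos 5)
Your overall architecture matches the paper's: normalize the residual $h_n=f-p_n^*$, represent it by a Cauchy integral over $\partial K$, and replace the Cauchy kernel by a polynomial kernel built from Shirokov's fast-decreasing polynomials so that the correction is controlled on $K$ and geometrically small on $E$. The interior estimate and the final assembly are fine. But the step you defer --- the bound $\|R_n\|_{\partial K}=O(E_n(f))$ --- is not a technicality that ``standard estimates for the Cauchy singular integral on the Lipschitz curve $\partial K$ acting on continuous data'' will supply: the Cauchy singular integral operator is not bounded on $C(\partial K)$ or in sup norm, and with your kernel the loss is real. Writing $V=V_{n,\zeta}$, the deviation of your kernel from the Cauchy kernel is $V_{n,\zeta}(z)/(\zeta-z)$, and for $z\in\partial K$ the factor $V_{n,\zeta}(z)$ is \emph{not} small when $\zeta$ ranges over a flat portion of $\partial K$ containing $z$ (the square $S_\zeta$ has its side tangent to $K$ at $\zeta$, so points $z$ of a flat boundary arc stay at distance $o(1)$ from $\partial S_\zeta$ no matter how far they are from $\zeta$). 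The cancellation $|1-V_{n,\zeta}(z)|\preccurlyeq n|\zeta-z|$ saves only the range $|\zeta-z|\preccurlyeq 1/n$, and the remaining integral $\int_{1/n\le|\zeta-z|}|\zeta-z|^{-1}\,|d\zeta|$ contributes a factor $\log n$. So your construction yields $\|f-p_n\|_K\preccurlyeq E_n(f)\log n$, which is not near-best in the sense required by the theorem (a fixed constant $C$).

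This is exactly the point where the paper inserts an additional ingredient that your proposal lacks: Dzjadyk's polynomial kernel $K_{[n/2]}(\zeta,z)$. The paper's kernel is
$Q_n(\zeta,z)=\dfrac{1-R_{[n/2]}(z,\zeta)}{\zeta-z}+R_{[n/2]}(z,\zeta)\,K_{[n/2]}(\zeta,z)$,
so that the deviation from the Cauchy kernel becomes
$R_{[n/2]}(z,\zeta)\bigl(\tfrac{1}{\zeta-z}-K_{[n/2]}(\zeta,z)\bigr)$,
and the known pointwise estimate
$\bigl|\tfrac{1}{\zeta-z}-K_n(\zeta,z)\bigr|\preccurlyeq \rho_{1/n}(z)\,\bigl(|\zeta-z|\,(|\zeta-z|+\rho_{1/n}(z))\bigr)^{-1}$
makes the boundary integral uniformly bounded after a dyadic decomposition of $\partial K$ into arcs $\tau_s$ with $\mathrm{mes}\,\tau_s\preccurlyeq 2^s r$ (this is where the class-$B_k$ geometry of convex sets enters). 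The extra term $R\,K$ is still a polynomial in $z$, so near-bestness and the interior decay are preserved (on $E$ it carries the small factor $R_{[n/2]}(z,\zeta)$). Without this device, or some substitute achieving the same uniform-in-$n$ bound at the Cauchy singularity, your proof does not close; since you explicitly identify this as the crux and leave it unproved with an inapplicable reference, I count it as a genuine gap rather than an omitted routine verification.
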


   It turns out, if a compact set has an external angle smaller than $\pi$, then geometric convergence of almost optimal polynomials might be impossible. 
   For a disc with a sector of angle $\alpha \pi$ removed, $\alpha < 1$, this was shown in \cite{Maimeskul} and \cite{AppOnTheBoundAndInside}. Nevertheless, the value of an external angle alone in general does not characterize the behavior of almost optimal polynomials, as we demonstrate in the next example.

 For $0<\alpha<1$ and some $R>0$, consider a compact set $K$ whose boundary consists of the arcs $\{re^{\pm i \alpha \pi/2}, 0 \leq r \leq R\} \cup \{re^{i (\pi \pm \alpha \pi/2)}, 0 \leq r \leq R\} \cup \{Re^{ i \phi}, -\alpha \pi/2 \leq \phi \leq \alpha \pi/2\} \cup \{Re^{ i (\pi + \phi)}, -\alpha \pi/2 \leq \phi \leq \alpha \pi/2\}$. For such compact set we obtain the next result.

\begin{theorem} \label{AngleDom}
	Consider a function $f \in A(K)$, that is not analytic at $z=0$. Then, if $\alpha \leq \beta:=1 - \alpha$, there exists a sequence of "near-best" polynomials that have geometric rate of convergence in the interior points of $K$. If $\alpha > \beta$, then the geometric rate of convergence in the interior of $K$ is impossible.
\end{theorem}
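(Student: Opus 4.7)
I would decompose $K$ into its two convex lobes $K_{+} := K \cap \{\Re z \geq 0\}$ and $K_{-} := K \cap \{\Re z \leq 0\}$, which touch only at the origin, and then combine approximants produced on each lobe by Theorem~\ref{th1} into a global approximant via a polynomial partition of unity. The governing geometric data are the interior angle $\alpha\pi$ of each lobe at $0$ and the two complementary sectors of angle $\beta\pi$.

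\textbf{Positive direction $(\alpha \leq \beta)$.} By Theorem~\ref{th1} applied to $K_{\pm}$ separately, near-best polynomial approximants $p_n^{\pm}$ to $f|_{K_{\pm}}$ of degree at most $n$ exist, with geometric convergence on every compactum in $\ir K_{\pm}$. I would glue them by a polynomial cutoff $\chi_N$ of degree $N = o(n)$, exponentially close to $1$ on $K_{+}$, exponentially close to $0$ on $K_{-}$, and bounded on all of $K$, built from Shirokov's rapidly decreasing polynomials \cite{Shirokov}. The condition $\alpha \leq \beta$ is exactly what allows exponential decay on $K_{-}$ and exponential closeness to $1$ on $K_{+}$ to be realised simultaneously by one polynomial: the rate attainable at distance $\rho$ from the pinch is controlled by the interior opening $\alpha\pi$ of the target lobe, while the room available inside the complement of $K$ at the pinch is $2\beta\pi$, so the two requirements are compatible precisely when $\alpha \leq \beta$. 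Setting
\[
    p_n := \chi_N\, p_{n-N}^{+} + (1 - \chi_N)\, p_{n-N}^{-}
\]
and expanding $f - p_n = \chi_N(f - p_{n-N}^{+}) + (1 - \chi_N)(f - p_{n-N}^{-})$, the near-best bound on $K$ follows from boundedness of $\chi_N$ and $1-\chi_N$, while the geometric rate on interior compacta is inherited from the per-lobe rate multiplied by the exponential smallness of the cutoff on the opposite lobe.

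\textbf{Negative direction $(\alpha > \beta)$.} I would argue by contradiction: suppose a near-best sequence $p_n$ satisfies $\|f - p_n\|_E \leq C E_n(f)\, e^{-cn}$ on a compactum $E \subset \ir K$ meeting both lobes. The consecutive differences $\Delta_n := p_{n+1} - p_n$ are then polynomials of degree at most $n+1$, bounded by $C E_n(f)$ on $K$ and by $C E_n(f)\, e^{-cn}$ on $E$. A Bernstein--Walsh-type estimate driven by the Green function of $\Ccb \setminus K$ with pole at infinity transports these two-sided bounds into a punctured neighbourhood of $0$; the local behaviour of this Green function at the double cusp is controlled by both $\alpha$ and $\beta$, and when $\alpha > \beta$ the resulting pointwise bound on $|\Delta_n|$ near $0$ is strong enough that the telescoping series $\sum_n \Delta_n$ extends $f$ analytically past the origin, contradicting the hypothesis that $f$ is not analytic at $0$.

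\textbf{Main obstacle.} The decisive step is the Bernstein--Walsh transport across the bowtie's pinch in the negative direction, which is not off-the-shelf: one must determine how the Green function of $\Ccb \setminus K$ behaves as both pinching sectors are approached simultaneously, with the correct $\alpha$-versus-$\beta$ exponent appearing at the double cusp. In the positive direction the delicate point is calibrating Shirokov's construction so that a single polynomial serves as a cutoff on both lobes at once, which is exactly where the hypothesis $\alpha \leq \beta$ is used.
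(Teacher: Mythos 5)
Your negative direction follows the same skeleton as the paper's: assume geometric convergence on an open set, bound the consecutive differences $P_n-P_{n-1}$ near the origin by a two-sided harmonic-measure argument, and conclude that the telescoping series continues $f$ analytically past $0$. The paper's Lemma~\ref{EstimPol} makes this precise with two competing terms: the growth term $\exp(c''n|z|^{1/\beta})$ comes from Bernstein's inequality and the estimate $|\Phi(z)|-1\preccurlyeq|z|^{1/\beta}$ for the exterior map at the cusp of exterior angle $\beta\pi$, while the decay term $\exp(-c'n|z|^{\alpha/(\alpha')^2})$ comes from a module-of-curve-families lower bound for the harmonic measure, inside a cone of opening $\alpha'\pi$ contained in $K$, of the arc on which the polynomial is already exponentially small; the hypothesis $\alpha>\beta$ is what permits a choice $\beta<\alpha'<\alpha$ with $(\alpha')^2>\alpha\beta$, so that the decay term wins for small $|z|$. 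Your sketch attributes everything to the Green function of $\Ccb\setminus K$ at the ``double cusp,'' which by itself only yields the growth term; without the interior harmonic-measure estimate there is nothing to compete against it, so this half is on the right track but incomplete.

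The positive direction has a genuine gap. The two lobes $K_+$ and $K_-$ intersect at the origin, so no function --- polynomial or otherwise --- can be exponentially close to $1$ on all of $K_+$ and exponentially close to $0$ on all of $K_-$: at $z=0$ these requirements force $1\leq 2e^{-cN}$, which fails for large $N$. Any repair must introduce a transition zone around $0$, and there the identity $f-p_n=\chi_N(f-p^+_{n-N})+(1-\chi_N)(f-p^-_{n-N})$ no longer gives a near-best bound, because $p^+_{n-N}$ is controlled only on $K_+$; at points of $K_-$ it can grow like $\exp\big(c\,n\,g_{\Ccb\setminus K_+}(z)\big)$, and a cutoff of degree $N=o(n)$ cannot suppress growth at scale $n$. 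In effect you are postulating precisely the fast-decreasing polynomial adapted to the pinched geometry whose existence is the whole difficulty. The paper sidesteps the gluing entirely: it observes that $w=z^2$ maps the bowtie two-to-one onto the single sector $K'=\{re^{i\phi}:\ 0\leq r\leq R^2,\ |\phi|\leq\alpha\pi\}$, which is convex exactly when $2\alpha\pi\leq\pi$, i.e.\ when $\alpha\leq\beta$; the fast-decreasing polynomials are built for the convex set $K'$ and composed with $z^2$ (hence still polynomials in $z$), after which the proof of Theorem~\ref{th1} is run verbatim. That is where the hypothesis $\alpha\leq\beta$ actually enters --- as convexity of the squared image --- not through a balance between the interior opening of one lobe and the exterior room at the pinch.
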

	
	Rapidly decreasing polynomials (\ref{pol}) can also be applied for enhancing rational approximation on convex sets. Gopal and Trefethen in \cite{Tr} showed that for a function $f$ with corner singularities approximation by rational functions can achieve root-exponential convergence. We show that approximants can be modified so that at points of analyticity of $f$ they achieve geometric convergence.

	\begin{theorem} \label{polygon}
		Let $K$ be a convex polygon with corners $\omega_{1}, ..., \omega_{k}$ and let f be an analytic function in $K$ that is analytic on the interior of each side segment and can be analytically continued to a disc near each $\omega_{i}$ with a slit along the exterior bisector there. Assume $f$ satisfies $f(z) - f(\omega_{i}) = O(|z - \omega_{i}|^{\delta})$, $z \ra \omega_{i}$ for each $i$ for some $\delta >0$.
		There exist rational functions $\{r_{n}\}$ of degree $n$, $1 \leq n < \infty$ satisfying 
		$$
		\|f - r_{n}\|_{K} = O(e^{-C\sqrt{n}}),
		$$
		for some $C>0$, and 
		$$
		\underset{n \ra \infty}{\overline{\lim}} \|f - r_{n}\|_{E} e^{cn}= 0,
		$$
		for an arbitrary compact subset $E \in \textnormal{Int} K$ with $c=c(E)>0$ independent of $n$.
	\end{theorem}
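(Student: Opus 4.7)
The plan is to superpose two pieces: a Gopal–Trefethen rational approximant of \cite{Tr} that absorbs the corner singularities at root-exponential rate on all of $K$, and a polynomial correction extracted from the near-best machinery of Theorem~\ref{th1} that upgrades the interior rate to geometric. Both ingredients already exist; the work is to arrange them so that the degree budget is respected and both bounds hold simultaneously.

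Fix $n$ and set $N := \lfloor n/2\rfloor$. Applying the Gopal–Trefethen theorem under the hypotheses of Theorem~\ref{polygon} (analytic extension to slit discs at each $\omega_i$ together with the H\"older-type bound at the corners) produces a rational $R_N$ of degree $N$ whose poles lie on the exterior bisectors at each $\omega_i$, in particular strictly outside $K$, and which satisfies $\|f - R_N\|_K \leq C_1 e^{-c_1 \sqrt{N}}$. Because $R_N$ is then analytic on $K$, the residual $g_N := f - R_N$ belongs to $A(K)$ and inherits the same bound. Now apply the construction behind Theorem~\ref{th1} to $g_N$ in degree $N$: this yields a polynomial $q_N \in \Pp_N$ with $\|g_N - q_N\|_K \leq C_2 E_N(g_N,K) \leq C_2\|g_N\|_K$ and, for every compact $E \subset \ir K$,
$$
\|g_N - q_N\|_E \leq C(E)\, e^{-c(E)N}\, \|g_N\|_K.
$$
Set $r_n := R_N + q_N$; after clearing denominators this is a rational function of degree at most $2N \leq n$, with poles only at the poles of $R_N$, hence outside $K$.

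Plugging the two estimates in, $\|f - r_n\|_K = \|g_N - q_N\|_K = O(e^{-c_1\sqrt{n/2}})$, which is the first claim, while on $E$ one finds $\|f - r_n\|_E \leq C(E) e^{-c(E)n/2} \cdot C_1 e^{-c_1\sqrt{n}}$, so $\overline{\lim}_{n}\|f - r_n\|_E\, e^{cn} = 0$ for any $c < c(E)/2$. The main obstacle is precisely that Theorem~\ref{th1} is stated for a \emph{fixed} $f$ with only a $\overline{\lim}$ conclusion, whereas here we apply its construction to a sequence of residuals $g_N$ varying with $N$. I expect the proof of Theorem~\ref{th1} to deliver an explicit estimate of the shape displayed above by means of Shirokov's rapidly decreasing polynomials $L_n$, which on a convex $K$ satisfy $\|L_n\|_K \leq 1$ and $|L_n(z)|\leq e^{-c(E)n}$ on $E\subset\ir K$; the near-best polynomial is then formed as the best approximant of some fixed low degree plus a convolution against $L_n$, an operation whose $K$-norm is essentially unchanged but whose interior norm is multiplied by $e^{-c(E)n}$. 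Unwrapping that construction to isolate the uniform dependence on the input function $h \in A(K)$ is the one real hurdle; the remaining steps (degree accounting, pole locations, the final arithmetic) are routine.
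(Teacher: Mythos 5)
Your proposal is correct in outline but takes a genuinely different route from the paper. You superpose two black boxes: a Gopal--Trefethen approximant $R_N$ from \cite{Tr} that handles the corner singularities at root-exponential rate on all of $K$, and then the near-best polynomial operator from Theorem \ref{th1} applied to the residual $g_N = f - R_N$ to gain the geometric factor on compact subsets of $\ir K$. The paper instead reduces Theorem \ref{polygon} to a local model (a sector $A_\Theta$ inside the slit disc $A_\pi$) and builds a \emph{single} integrated approximant: the Cauchy integral of $f$ against a modified kernel $q_n(\zeta,z) = \frac{1}{\phi(\zeta)}\frac{\phi(\zeta)-\phi(z)}{\zeta-z} + \frac{\phi(z)}{\phi(\zeta)}\frac{1-R_n(\zeta,z)}{\zeta-z}$, where $\phi$ carries the exponentially clustered poles $\beta_j = -e^{\sigma j/\sqrt{n}}$ on the slit and $R_n(\zeta,z)$ is the fast-decreasing polynomial of (\ref{fastPol}); the error then has the closed form $\frac{1}{2\pi}\int \frac{\phi(z)}{\phi(\zeta)}\frac{f(\zeta)}{\zeta-z}R_n(\zeta,z)\,d\zeta$, from which the $O(e^{-C\sqrt{n}})$ bound on $K$ and the geometric bound on $E$ both drop out of one expression. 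What your route buys is modularity and brevity, at the cost of having to verify the uniformity you flag: the issue is real but resolvable, since the construction in the proof of Theorem \ref{th1} is the linear operator $g \mapsto p_N^*(g) + E_N(g)\,P_N$ with $P_N$ given by integration against a kernel $Q_N(\zeta,z)$ that does not depend on the input function, so every estimate there scales with $\|g\|_K$ (your guess that it is a ``convolution against $L_n$'' is not quite the mechanism, but the linearity and function-independent constants you need do hold). What the paper's route buys is an explicit formula with no appeal to uniformity, and a clean localization at each corner; on the other hand it leaves the patching of the local approximants into a global one for the polygon as the ``follows directly'' step. Both arguments are sound; yours is arguably the more transparent derivation of the global statement from the two cited ingredients.
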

	
	The paper is organized as follows. In Section 2 we introduce fast-decreasing polynomials and discuss their properties. Sections 3 and 4 contain the proofs of Theorem \ref{th1} and Theorem \ref{AngleDom}, respectively. Finally, in Section 5 we consider an application of polynomials (\ref{pol}) for rational approximation.

	\section{Notation and preliminaries}
	 For $a>0$ and $b>0$ we will use the notation $a\preccurlyeq b$ if $a\leqslant cb$, with some constant $c>0$. The expression $a\asymp b$ means $a\preccurlyeq b$ and $b\preccurlyeq a$.
	
	The open disc of radius $\delta$ centered at $z$ is denoted by $D(z,\delta)$. 
	
	Let $d(A,B)$ be the distance between $A\subset \mathbb{C}$ and $B\subset\mathbb{C}$. Further, for $A\subset \mathbb{C}$ and $\delta>0$, we set
	$$
	A_{\delta}:= \{\zeta: d(\zeta, A)< \delta\}.
	$$
	
	Consider a conformal mapping
	$\Phi: \Omega \rightarrow \Delta := \{\omega: |\omega|>1\}$, normalized in such a way that $\Phi(\infty)=\infty$, $\Phi'(\infty)>0$,
	and denote $\Psi:= \Phi^{-1}$.
	By $\widetilde{\Omega}$ we denote compactification of the domain  $\Omega$ by prime ends in the Caratheodory sense. Denote by $|\Zz|$ the impression of a prime end $\Zz \in \widetilde{\Omega}$.
	
	For $u>0$ define
	$$
	\mathcal{L}_{u}:=\{\zeta \in \Omega: |\Phi(\zeta)|=1+u\};
	$$ $$
	\rho_{u}(z):=d (z,\mathcal{L}_{u}); 
	$$
	$$
	 \widetilde{\zeta}_{u}:= \Psi[(1+u)\Phi(\mathfrak{Z})], \;\;\; \mathfrak{Z} \in \widetilde{\Omega}.
	$$ 
	\begin{definition} \label{definition}
		We say that a compact set $K$ with a rectifiable boundary $L$ belongs to the class $B_{k}$ for some $k \in \N$ if for all $n \in \N$ the boundary $L$ has the following properties:
		
		(i)  $\forall \zeta, z \in L$ the linear measure of $s(\zeta,z)$ of the set $L \cap D(z; |z - \zeta|)$ satisfies
		$$
		s(\zeta ,z) \preccurlyeq |\zeta - z|;
		$$
		
		(ii) the boundary $L$ can be represented, including its folds, as $L = \cup_{j=1}^{k} L_{j}$, with $k \in \N$ being a finite number, so that $\forall \zeta, z \in L_{j}$, $j = 1,...,k,$ the relations 
		$$
		|z - \widetilde{z}| \asymp \rho^{(j)}_{1/n}(z),
		$$
		$$
		|\zeta - \widetilde{\zeta}|^{k} \preccurlyeq |\widetilde{\zeta} -z|^{k-1} |\widetilde{z} -z|, 
		$$
		hold, where $\rho^{(j)}_{1/n}(z)$ is the distance of $z$ to the part of the $n^{th}$ level line
		$$
		\widetilde{L}_{j}:=\Psi((1+\frac{1}{n})\Phi(L_{j})),
		$$
		and the constants in $\preccurlyeq$ and $\asymp$ do not depend on $n$, $\zeta$ and $z$.
	\end{definition}
	
	Following Shirokov \cite{Shirokov}, we introduce polynomials $R_{n}$ and estimate them on the corresponding squares.
	
    	\begin{lemma} \label{improving}
    	Let $S_{l}$ be a square $\{z = x+ iy: -2l \leqslant x \leqslant 0, |y|\leqslant l\}$.
    	There exists a polynomial $R_{n}$ of degree $3n$ of the form $R_{n}(z) = 1+ zp(z)$ with the property
    	\beq \label{ineq}
    	|R_{n}(z)| \leqslant 1, \;\;\; z \in S_{l}; \;\;\; |R_{n}(z)| \leqslant C, \;\;\; z \in S_{l+c/n},
    	\eeq
    	where $C, c$ do not depend on $n$, and
    	there exists a positive function $c(\varepsilon)$, $0<\varepsilon<1$, such that if $z \in S_{l}$, $d(z, \partial S_{l}) \geqslant \varepsilon l$, then
    	$$
    	|R_{n}(z)| \leqslant Ce^{-c(\varepsilon)n}.
    	$$ 
    	
    \end{lemma}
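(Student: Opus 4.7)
The plan is to construct $R_n$ as a polynomial approximation of $e^{n\psi(z)}$, where $\psi$ is an analytic function tuned so that $|e^{n\psi}|$ already has the three required magnitude bounds. A preliminary linear scaling $z\mapsto z/l$ reduces the problem to the case $l=1$: it preserves the degree $3n$, the form $1+zp(z)$, and all three estimates up to relabelling of constants.

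The key ingredient is an analytic function $\psi$ on a fixed open neighborhood $U$ of $\overline{S_1}$ (with $S_{1+c/n}\subset U$ for all sufficiently large $n$) satisfying $\psi(0)=0$, $\mathrm{Re}\,\psi\le 0$ on $S_1$, and $\mathrm{Re}\,\psi(z)\le -\eta(\varepsilon)$ whenever $d(z,\partial S_1)\ge\varepsilon$. My way to build $\psi$ would be to start with a bounded harmonic barrier $u$ on a square slightly larger than $S_1$ that vanishes precisely at $0\in\partial S_1$ and is strictly positive elsewhere, take a local harmonic conjugate $v$, and set $\psi:=-(u+iv)$. Working on the dilated square provides analyticity across the four right-angle corners, and a maximum-principle argument on $u$ yields the interior decay of $\mathrm{Re}\,\psi$ at a rate proportional to the distance to $\partial S_1$.

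With such $\psi$ in hand, Bernstein--Walsh / Faber approximation produces a polynomial $\widetilde R_n$ of degree at most $3n$ with
$$
\bigl\|e^{n\psi}-\widetilde R_n\bigr\|_{S_{1+c/n}}\le Ce^{-\delta n}
$$
for some $\delta>0$, the degree budget being comfortable because $e^{n\psi}$ is analytic on a neighborhood whose size in $n$ is fixed. Setting $R_n(z):=\widetilde R_n(z)/\widetilde R_n(0)$ enforces $R_n(0)=1$, hence the form $R_n(z)=1+zp(z)$; since $\widetilde R_n(0)=1+O(e^{-\delta n})$, the normalization is essentially free. The three magnitude bounds then fall out of $|R_n(z)|\le e^{n\,\mathrm{Re}\,\psi(z)}+O(e^{-\delta n})$ combined with the defining properties of $\psi$: value $1$ at $0$, $|R_n|\le 1+O(e^{-\delta n})$ on $S_1$, $|R_n|\le C$ on $S_{1+c/n}$, and $|R_n(z)|\le Ce^{-c(\varepsilon)n}$ for $z$ at distance $\ge\varepsilon$ from $\partial S_1$.

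The main obstacle is constructing $\psi$ with the listed combined boundary/interior behaviour while keeping it analytic across the four right-angle corners of $S_1$: an honest conformal map of $S_1$ itself has derivative singularities at those corners, so one is forced to work on a dilated square and then verify that the barrier still drops off at rate $\eta(\varepsilon)$ inside $S_1$. A secondary and more delicate point is sharpening the bound $\|R_n\|_{S_1}\le 1+O(e^{-\delta n})$ produced by the approximation step into the clean $\|R_n\|_{S_1}\le 1$ demanded by the lemma; I would handle this either by pre-shifting $\psi$ by a small negative constant and re-normalizing, or by replacing the Faber step with an extremal-polynomial construction (for instance a Fej\'er-type kernel at $0$) tailored to the geometry of $S_1$.
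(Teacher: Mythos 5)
Your overall strategy (manufacture a function $e^{n\psi}$ with the right modulus profile and then approximate it by polynomials of degree $3n$) is workable in spirit, but two of its steps have genuine problems, and fixing the second one essentially forces you back to the paper's much simpler construction. First, the barrier $u$ as you describe it cannot exist: a function harmonic on a square strictly larger than $S_1$, nonnegative there, and vanishing at $0$ --- which is an \emph{interior} point of that larger square --- is identically zero by the minimum principle. What you actually need is $u=-\mathrm{Re}\,\psi$ harmonic on a neighborhood of $\overline{S_1}$, nonnegative \emph{on $S_1$ only}, vanishing on $S_1$ only at $0$, and necessarily negative just outside the right edge; such functions exist (e.g.\ $-\log|(1+z)(1+z^2)|$ on $S_{1/3}$), but your construction has to be restated. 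Second, and more seriously, the lemma demands the exact inequality $\|R_n\|_{S_l}\le 1$ together with $R_n(0)=1$, where $0\in\partial S_l$ is precisely the point at which $|e^{n\psi}|$ attains the value $1$. An additive Bernstein--Walsh error of size $O(e^{-\delta n})$ can push $|R_n|$ above $1$ at points of $S_1$ arbitrarily close to $0$, and neither of your proposed repairs closes this: pre-shifting $\psi$ by a negative constant is exactly undone when you divide by $\widetilde R_n(0)$ to restore $R_n(0)=1$, and any corrective factor $w$ with $w(0)=1$ and polynomial degree $O(n)$ satisfies $|w(z)|\ge 1-Cn|z|$, so it cannot suppress an excess of size $e^{-\delta n}$ in the region $|z|\lesssim e^{-\delta n}/n$ where the excess may live. (A smaller issue: "the degree budget is comfortable" needs justification, since $\sup_U\mathrm{Re}\,\psi>0$ grows with the neighborhood $U$ exactly as the Bernstein--Walsh rate $\log\rho(U)$ does; this is fixable by replacing $\psi$ with $t\psi$ for small $t>0$, but it is not automatic.)

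The paper avoids all of this by taking $R_n$ to be an exact $n$-th power: after rescaling to $l=1/3$ it sets $r(z)=(1+z)(1+z^2)=1+z+z^2+z^3$ and $R_n=r^n$, so that $R_n(0)=1$ and the form $1+zp(z)$ are automatic, the clean bound $|R_n|\le 1$ on $S_{1/3}$ follows from the elementary boundary check $|r|\le 1$ on $\partial S_{1/3}$ with equality only at $0$, the bound $|R_n|\le C$ on $S_{1/3+c/n}$ follows from $|r|^2\le 1+c_2\delta$ on the shifted right edge, and the interior decay comes from $|r|\le e^{-c(\varepsilon)}<1$ on the compact set $\{z\in S_{1/3}: d(z,\partial S_{1/3})\ge \varepsilon/3\}$. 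In effect, the multiplicative building block $q$ with $q(0)=1$ and $|q|\le 1$ on the square that your approach would need in order to achieve the exact sup-norm bound \emph{is} the content of the lemma at degree $3$; once you have it, raising it to the $n$-th power is the whole proof.
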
 
    
    \begin{proof}
    	Clearly, it is enough to prove the statement for the square $S_{1/3} = \{z = x+ iy: -\frac{2}{3} \leqslant x \leqslant 0, |y|\leqslant \frac{1}{3}\}$. 
    	Let 
    	$$
    	r(z) = \big(1 + z\big)\big(1 + z^{2}\big),
    	$$
    	\beq \label{pol}
    	R_{n}(z) = (r(z))^{n} = 1+ z p(z),
    	\eeq
    	where $p(z)$ is a polynomial of degree $3n-1$.
    	
    	To estimate the value of $r(z)$ on the sides of $ S_{1/3 + c/n}$, we consider a several cases:
    	
    	1) For the sides of the square $z = - \frac{2}{3}+ iv$, $|v|\leqslant \frac{1}{3}$ and $z = - u \pm \frac{i}{3}, 0 \leqslant u \leqslant \frac{2}{3}$ it was shown in \cite{Shirokov} that
    	$|r(z)| < 1$ at those points, therefore, it remains true for the corresponding sides of $S_{1/3+c/n}$ with $n$ large enough.
    	
    	2) Let $z = \delta + iv$, $|v|\leqslant \frac{1}{3} + \delta$.
    	In this case 
    	\begin{align*}
    		|r(z)|^{2} =& |1+ z|^{2} |1+ z^{2}|^{2} = \big[ \big(1+ \delta \big)^{2} + v^{2} \big] \big[ \big(1+ \delta^{2}  - v^{2}\big)^{2} + \big(2 \delta v\big)^{2}\big] \\
    		=&  \big[  \big(1+\delta^{2} + v^{2}\big) + 2\delta\big] \big[ \big(1+ \delta^{2}  - v^{2}\big)^{2} + \big( 2 \delta v \big)^{2}\big] \\
    		\leqslant& \big(1+ \delta^{2}  - v^{2}\big) \big(\big(1+ \delta^{2}\big)^{2}  - v^{4}\big) + c_{1} \delta \leqslant 1+  c_{2} \delta
    	\end{align*}
    	Thus, for $\delta:=\frac{c}{n}$, and $z = \frac{c}{n} + iv$, $|v|\leqslant \frac{1}{3} + \frac{c}{n}$ we have 
    	$$
    	|R_{n}(z)| \leqslant \big(1+  \frac{c_{3}}{n}\big)^{n} \leqslant C, 
    	$$
    	which completes the proof.
   
    	\vspace{\baselineskip}
    \end{proof}
              Next, we describe the construction of rapidly decreasing polynomials for a convex set $K$.
              For $\zeta \in \Omega \cap K_{\varepsilon}$, $\varepsilon>0$, consider the corresponding $S_{\zeta}$, a square with a side length $l= 2 \textnormal{diam} K_{\varepsilon}$, built as follows. 
              Take a point $\zeta_{0}$, which is the closest to the $\zeta$. Construct the line $t$, orthogonal to the segment $[\zeta_{0}, \zeta]$ and passing through $\zeta$. Since $K$ is convex, it lies in one of the half-spaces bounded by $t$. We let $\zeta$ be the middle point of one of the sides of $S_{\zeta}$, and $S_{\zeta}$ choose to be lying in the half-space containing $K$. By $\Theta(\zeta)$ denote an angle between the $[\zeta_{0}, \zeta]$ and positive $x$ axes.    
              
              For $\zeta \in K \cap \Omega_{\delta}$ the construction is analogous, we choose $\zeta_{0}$ being the closest point to $\zeta$ on $\partial K$ and consider the corresponding square $S_{\zeta}$. In this case some points of $K$ will lie outside of $S_{\zeta}$, at distance at most $\widetilde{c}\delta$ from the side of $S_{\zeta}$ that passes through $\zeta$.
              
              Finally, set
              \beq \label{fastPol}
              R_{n}(z,\zeta) := R_{[n/3]}(e^{-i\Theta(\zeta)}(z-\zeta)),
              \eeq
              where $\zeta \in \Omega_{\delta} \cap K_{\varepsilon}$, $z \in K$.
              
            \section{Proof of Theorem \ref{th1}}
            \begin{proof}
            First, consider
            $$f_{n}(z):= \frac{f(z)-p^{*}_{n}(z)}{E_{n}(f)},$$
            where $p^{*}_{n}$ are polynomials of best approximation to the function $f$ on $K$.
            
            By $P_{n}(z)$ we denote the polynomial 
            $$
            P_{n}(z):= \frac{1}{2 \pi i } \int_{L} f_{n}(\zeta)Q_{n}(\zeta, z) d \zeta,
            $$
            with
            $$
            Q_{n}(\zeta,z):= \frac{1 - R_{[n/2]}(z,\zeta)}{\zeta -z} + R_{[n/2]}(z,\zeta) K_{[n/2]}(\zeta, z), 
            $$
            where $ R_{n}(z,\zeta)$ are polynomials (\ref{fastPol}), and 
            $
            K_{n}(\zeta, z) := K_{0,1,k,[\varepsilon n]}(\zeta,z)
            $
            is Dzjadyk's polynomial kernel (see \cite{ConstrTheory}, Chapter 3) with sufficiently small $\varepsilon$.
            
            Consider another family of functions
            $$
            \widetilde{f}_{n}(z) = \frac{1}{2 \pi  i} \int_{L} f_{n}(\zeta) \frac{1}{\zeta - z^{*}} d \zeta,
            $$
            where $z^{*}$ coincides with $z$ if $z \in \textnormal{Int} K$, and $z^{*} \in \Cc \setminus K$ is at the distance $n^{-4}$ from $z$ in case $z \in L$.
            
            Then we have 
            $$
            I(z):= \widetilde{f}_{n}(z) - P_{n}(z) = 
            \frac{1}{2 \pi i } \int_{L} f_{n}(\zeta)  \bigg(\frac{1}{\zeta - z^{*}} - Q_{n}(\zeta,z)\bigg) d \zeta.
            $$

            If $z \in \cup_{\zeta \in L}  \overline{D(\zeta, \rho_{1/n}(\zeta))}$, then by $\zeta_{0}$ we denote a point on $L$ which is the center of a disc $D(\zeta_{0},\rho_{1/n}(\zeta_{0}))$ of the largest radius $\rho_{1/n}(\zeta)$ containing $z$. Let $D_{0}:= \overline{D(\zeta_{0},2\rho_{1/n}(\zeta_{0}))}$. Then one can see that the disc $D_{0}$ contains all the points $\zeta \in L$, such that $z \in \overline{D(\zeta,\rho_{1/n}(\zeta))}$. Denote by $\gamma, \sigma$ and $L_{j}'$ the arcs $\gamma:=L \cap D_{0}$, $\sigma:=\textnormal{Int}K \cap \partial D_{0}$ and $L_{j}':=L_{j} \setminus \gamma$. (here the boundary $L$ consists of the union of subarcs $L_{j}$ from the definition of the class $B_{k}$).

            Now, since $\rho_{1/n}(\zeta) \asymp \rho_{1/n}(z)$, $\forall$ $z \in L$, $\zeta \in L \cap D(z, \rho_{1/n}(z))$ (\cite{ConstrTheory}, Chapter 3), we get
            \begin{align*} 
            	|I_{\gamma}| =& \bigg| \int_{\gamma}  f_{n}(\zeta)  \bigg(\frac{1}{\zeta - z^{*}} - Q_{n}(\zeta,z)\bigg) d \zeta \bigg|\\
            	=& \bigg| - \int_{\sigma} f_{n}(\zeta) \bigg\{ \frac{z^{*} - z}{(\zeta - z^{*}) (\zeta - z)}  +  R_{[n/2]}(z,\zeta) \bigg( K_{[n/2]}(\zeta,z) - \frac{1}{\zeta -z} \bigg)\bigg\} d \zeta\bigg| \\
            	\preccurlyeq& n^{-4} \frac{1}{\rho_{n}(\zeta_{0})}  + \frac{1}{\rho_{n}(\zeta_{0})} \rho_{n}(\zeta_{0}) \preccurlyeq 1, 
            \end{align*}
            where for the first term in the inequality we used the fact that $\rho_{n}(\zeta) \succcurlyeq 1/n^{2}$,  and for the second one the bound
            $$
            \bigg| \frac{1}{\zeta -z} - K_{n}(\zeta, z) \bigg| \preccurlyeq \frac{\rho^{(j)}_{1/n}(z^{(j)})}{|\zeta -z| (|\zeta - z| + \rho^{(j)}_{1/n}(z^{(j)}))},
            $$ 
            where $z^{(j)}$ is a point of $L_{j}$ which is the closest to $z$ (see \cite{ConstrTheory}, Chapters 2 and 3, respectively).
            \vspace{\baselineskip}
            
            Now we estimate the integral along $L_{j}'$. Let 
            $$
            r:= r_{j} = \frac{1}{2}(|z - z^{(j)}| + \rho_{1/n}(z^{(j)}) )
            $$
            and consider circles $O_{s}$ centered at $z$ with radius $2^{s}r$, $s = \overline{0,N}$, where $N$ is chosen such that
            $$
            O_{N-1} \cap L'_{j} \neq \emptyset; \;\;\; O_{N} \cap L'_{j} = \emptyset
            $$ 
            By $\tau_{s}$ denote the parts of the arc $L'_{j}$ which lies between the circles $O_{s}$ and $O_{s+1}$, $s=\overline{0,N-1}$. According to the property of the set of class $B_{k}$, we have $\textnormal{mes} \tau_{s} \preccurlyeq 2^{s} r$.
            
            Thus, we obtain
            \begin{align*}
            	&\bigg| \frac{1}{2 \pi i } \int_{L'_{j}} f_{n}(\zeta)  \bigg(\frac{1}{\zeta - z^{*}} - Q_{n}(\zeta,z)\bigg) d \zeta \bigg| \\
            	\leqslant&  \sum_{s=0}^{N-1} \int_{\tau_{s}} |f_{n}(\zeta)| \bigg| \frac{1}{\zeta - z^{*}} - Q_{n}(\zeta,z)\bigg| |d\zeta| \\
            	\preccurlyeq&  \sum_{s=0}^{N-1} \int_{\tau_{s}} |f_{n}(\zeta)| \bigg|   R_{[n/2]}(z, \zeta)\bigg( K_{[n/2]}(\zeta,z) - \frac{1}{\zeta -z} \bigg)\bigg| |d\zeta| \\
            	\preccurlyeq&  \sum_{s=0}^{N-1} \frac{1}{2^{s}r} \bigg( \frac{\rho^{(j)}_{1/n}(z^{(j)})}{2^{s}r}\bigg) 2^{s}r \preccurlyeq \frac{\rho^{(j)}_{1/n}(z^{(j)})}{r} \sum_{s=0}^{\infty} \frac{1}{2^{s}} \preccurlyeq 1.
            \end{align*}
            In addition, it is easy to see, due to the properties $R_{n}(z,\zeta)$, that for any compact set $E \subset \ir K$, 
            $$
            \|\widetilde{f}_{n} - P_{n}\|_{E} \preccurlyeq e^{-c n},
            $$
            where $c=c(E)$.
            
            Consequently, 
            $$
            p_{n}(z) := E_{n}(f) P_{n}(z) +  p^{*}_{n}(z)  
            $$
            is the desired sequence of "near-best" approximants. 
           \end{proof}
          
          \vspace{\baselineskip}

          Modifying the construction from \cite{Kryv1} with polynomials (\ref{fastPol}) for approximating harmonic functions, we also obtain
          	
          \begin{theorem} \label{th2}
          	Let $K \subset \Cc$ be a compact convex set with nonempty interior $\ir K$, $u \in H(K)$. Then there exists a sequence of "near-best" harmonic polynomials $\{h_{n}\}_{1}^{\infty}$, such that
          	$$
          	\underset{n \ra \infty}{\overline{\lim}} \|u - h_{n}\|_{E} \;e^{cn} =0
          	$$
          	holds for any compact set $E \subset \ir K$, with $c=c(E)$.
          \end{theorem}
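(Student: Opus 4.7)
The plan is to follow the construction from the proof of Theorem~\ref{th1}, modified as in \cite{Kryv1} to accommodate harmonic functions, with the Shirokov-type fast-decreasing polynomials there replaced by the convex-set polynomials $R_{n}(z, \zeta)$ of (\ref{fastPol}). Because $K$ is convex with nonempty interior it is simply connected, so every $u \in H(K)$ has a harmonic conjugate $v$ on $\ir K$ and $F := u + i v$ is analytic there. The harmonic near-best approximant will be $h_{n} := \operatorname{Re} H_{n}$, where $H_{n}$ is a complex polynomial of degree $n$ constructed by a Cauchy-type integral analogous to $P_{n}$ of Section~3.

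Concretely, I would let $h_{n}^{*}$ be a polynomial of best harmonic approximation of degree $n$ to $u$ on $K$, denote its error by $E_{n}^{h}(u,K)$, write $h_{n}^{*} = \operatorname{Re} q_{n}$ with $q_{n} \in \Pp_{n}$, and normalize $u_{n} := (u - h_{n}^{*})/E_{n}^{h}(u,K)$ so that $\|u_{n}\|_{K} = 1$. Mirroring Section~3, set $F_{n} := (F - q_{n})/E_{n}^{h}(u,K)$, with the imaginary additive constant fixed at some interior reference point, and define
\[
H_{n}(z) := \frac{1}{2\pi i}\int_{L} F_{n}(\zeta)\, Q_{n}(\zeta, z)\, d\zeta,
\qquad Q_{n}(\zeta, z) = \frac{1 - R_{[n/2]}(z, \zeta)}{\zeta - z} + R_{[n/2]}(z, \zeta)\, K_{[n/2]}(\zeta, z),
\]
where $K_{[n/2]}$ is the Dzjadyk kernel and $R_{[n/2]}(z,\zeta)$ is the polynomial (\ref{fastPol}). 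Put $h_{n} := h_{n}^{*} + E_{n}^{h}(u,K)\, \operatorname{Re} H_{n}$, a harmonic polynomial of degree $n$. The two boundary estimates $|I_{\gamma}| \preccurlyeq 1$, $|I_{L_{j}'}| \preccurlyeq 1$ and the interior bound $\|F_{n} - H_{n}\|_{E} \preccurlyeq e^{-c(E) n}$ from Section~3 then transfer, and the harmonic rate follows by taking real parts: $\|u - h_{n}\|_{E} \leq E_{n}^{h}(u,K)\, \|F_{n} - H_{n}\|_{E} \preccurlyeq E_{n}^{h}(u,K)\, e^{-c(E) n}$.

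The main obstacle is that $\|u_{n}\|_{K} = 1$ does not imply $\|F_{n}\|_{L} \preccurlyeq 1$: the harmonic conjugate of a bounded harmonic function need not be bounded at the boundary, so plugging $F_{n}$ into the integral directly may inflate the near-best constant by up to a $\log n$ factor and would spoil the $\preccurlyeq 1$ margin in the estimates for $|I_{\gamma}|$ and $|I_{L_{j}'}|$. This is handled in \cite{Kryv1} by either (i) replacing $Q_{n}$ by a real, harmonic-polynomial-reproducing kernel so that only the real-valued $u_{n}$ is integrated and the complex completion never appears on $L$, or (ii) shifting the contour of integration from $L$ to an inner level curve $\mathcal{L}_{1/n}$ on which $F_{n}$ is bounded in terms of $\|u_{n}\|_{K}$, and controlling the discrepancy using the convexity of $K$ together with the relation $|z - \widetilde{z}| \asymp \rho_{1/n}^{(j)}(z)$ of Definition~\ref{definition}. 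With either modification, the decay estimates of Lemma~\ref{improving} for $R_{[n/2]}(z,\zeta)$ enter exactly as in the proof of Theorem~\ref{th1}, giving both the near-best bound on $K$ and geometric convergence on interior compacta.
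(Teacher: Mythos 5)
Your proposal takes essentially the same approach as the paper: the paper's entire argument for Theorem \ref{th2} is the one-sentence remark that one modifies the construction of \cite{Kryv1} using the fast-decreasing polynomials (\ref{fastPol}) in place of the ones used there, which is exactly the plan you outline, and you additionally flag the real technical point (boundedness of the analytic completion of $u_{n}$ on $L$) that the cited construction must absorb. Since the paper supplies no further detail, your reconstruction is at least as complete as the paper's own proof.
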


      \section{Proof of Theorem \ref{AngleDom}}
      
      $\bullet$\textbf{ Case} $\alpha \leq \beta$
      
      	Notice that the image of points on the boundary of $K$ under the mapping $w = z^{2}$ is a set  $\partial K':=\{re^{\pm i \alpha \pi}, 0 \leq r \leq R^{2}\} \cup \{R^{2}e^{ i \phi}, -\alpha \pi \leq \phi \leq \alpha \pi\}$, and all the points of the interior of $K$ will be mapped to the interior points of $K'$. Notice also that $K'$ is a convex set, therefore we apply the construction above and define polynomials 
      
      $$
      R_{n}(z,\zeta) := R_{n}(e^{-i\Theta(\zeta^{2})}(z^{2}-\zeta^{2})),
      $$
      where $\zeta \in L:=\partial K$, $z \in K$.
      
      The set $K$ belongs to the class $B^{0}_{k}$, since it belongs to the class $B_{k}$ and satisfies the following (\cite{ConstrTheory}, Chapter 3):\\
      i) $\rho^{(1)}_{1/n} (Z_{0}) \asymp \rho^{(2)}_{1/n}(Z_{1})$, where $Z_{0}, Z_{1}$ are prime ends, such that $|Z_{i}| =0, \; i=1,2$.\\
      ii) $\forall$ $z \in L$ for each point $\zeta \in L \cap D(z, \rho_{1/n}(z))$
      $$
      \rho_{1/n}(\zeta) \asymp \rho_{1/n}(z).
      $$	
      
      Therefore, using properties i), ii), we replicate the construction of Theorem \ref{th1} and obtain the polynomials with the desired properties.

      	$\bullet$\textbf{ Case} $\alpha > \beta$
      
      First we prove an auxiliary lemma
      \begin{lemma} \label{EstimPol}
      	Suppose a polynomial of degree $n$ satisfies
      	$$
      	\|Q\|_{K} \leqslant C 
      	$$
      	and for some open set $V \subset \ir K$ 
      	$$
      	\|Q\|_{\overline{V}} \leqslant C e^{-cn}
      	$$
      	Then, for $|z|\leqslant \varepsilon$, $\varepsilon = \varepsilon(\widetilde{\alpha}, \beta)$, with $\beta<\widetilde{\alpha} < \alpha$, the inequality 
      	$$
      	|Q(z)| \leqslant C \exp(-c'n|z|^{1/\widetilde{\alpha}} + c''n|z|^{1/\beta}),
      	$$
      	holds.
      \end{lemma}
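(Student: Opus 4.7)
The plan is to bound $\log|Q(z)|$ near the corner $z=0$ by combining two estimates: a routine Bernstein--Walsh bound producing the correction $c''n|z|^{1/\beta}$, and a refined two-constants argument inside a cone of $K$ producing the gain $-c'n|z|^{1/\widetilde{\alpha}}$. The two cones of $K$ meeting at $0$ being symmetric, I would focus on $z$ in the right cone $K_+:=\{re^{i\theta}\in K:|\theta|\leq \alpha\pi/2\}$; after possibly shrinking, I may assume $\overline V$ is a compact subset of $\ir K$ at positive distance from $\partial K$ on which $|Q|\leq Ce^{-cn}$.

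The background bound comes from Bernstein--Walsh: for $\zeta\in\Om$, $|Q(\zeta)|\leq C\exp(ng_{\Om}(\zeta,\infty))$. Since each exterior wedge at $0$ has opening $\beta\pi$, $g_{\Om}(\zeta,\infty)\asymp|\zeta|^{1/\beta}$ there, and the maximum modulus principle applied on $D(0,|z|)$ yields $|Q(z)|\leq C\exp(c''n|z|^{1/\beta})$ for all $|z|\leq\varepsilon$. This already furnishes the second term in the claimed estimate.

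The gain comes from applying the two-constants (Nevanlinna) theorem to $\log|Q|$ in the domain $U:=\ir K_+\sm\overline V$. Its boundary splits into $\partial K_+$, where $\log|Q|\leq\log C$, and $\partial V$, where $\log|Q|\leq\log C-cn$, yielding
\[
\log|Q(z)|\leq\log C - cn\,\omega(z,\partial V;U), \qquad z\in U.
\]
It then remains to show a lower bound $\omega(z,\partial V;U)\geq c_0|z|^{1/\widetilde\alpha}$ as $z\to 0$. Inside the sub-cone $\{|\theta|<\widetilde\alpha\pi/2\}$, which lies strictly inside $K_+$ because $\widetilde\alpha<\alpha$, the function $\psi(z):=|z|^{1/\widetilde\alpha}\cos(\theta/\widetilde\alpha)$ is a positive harmonic function vanishing on the two sub-cone rays; comparison with $\omega$ at a fixed interior reference point delivers this lower bound up to a multiplicative constant. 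Combining the two estimates through the maximum principle applied to the subharmonic combination $\log|Q(z)|+c'n\psi(z)-c''n|z|^{1/\beta}$ on $D(0,\varepsilon)$ then produces the claimed inequality.

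The principal technical obstacle is making the harmonic-measure comparison uniform across the whole sector --- in particular, extending the lower bound to a neighborhood of the bounding rays $\theta=\pm\alpha\pi/2$, where $\psi$ loses its positivity. This is where the $|z|^{1/\beta}$ correction, absorbed via Bernstein--Walsh, becomes essential, and where the hypothesis $\widetilde\alpha>\beta$ guarantees the gain dominates the loss for small $|z|$ so that the bound is informative. The admissible range of $\varepsilon$ and the size of $c', c''$ then depend quantitatively on how close $\widetilde\alpha$ is allowed to approach $\alpha$ from below and $\beta$ from above.
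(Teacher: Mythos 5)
Your first two ingredients match the paper's: a Bernstein--Walsh/Bernstein bound $|Q(\zeta)|\leq C|\Phi(\zeta)|^{n}\leq C\exp(C'n|\zeta|^{1/\beta})$ coming from the exterior wedges of opening $\beta\pi$, and a two-constants estimate $\log|Q(z)|\leq \log C - cn\,\omega(z)$ with a lower bound of order $|z|^{1/\widetilde{\alpha}}$ for the harmonic measure of the set where $|Q|\leq Ce^{-cn}$. But there is a genuine gap in where your estimate lives. The lemma must hold on the \emph{full disc} $|z|\leq\varepsilon$ --- this is essential for the application, where one concludes that $P_{n}$ converges on a neighborhood of $0$ and contradicts non-analyticity of $f$ there --- whereas your two-constants argument is run on $U=\ir K_{+}\setminus\overline{V}$ and therefore says nothing about points in the exterior wedges or in the left cone. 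Your proposed patch (maximum principle for $\log|Q|+c'n\psi-c''n|z|^{1/\beta}$ on $D(0,\varepsilon)$) does not close this gap: $-|z|^{1/\beta}$ is superharmonic and $\psi=\mathrm{Re}\,z^{1/\widetilde{\alpha}}$ needs a branch cut, so the combination is not subharmonic on the disc; on $\partial D(0,\varepsilon)\cap K$ outside the subcone you only know $\log|Q|\leq\log C$, which is weaker than what the comparison would require there; and where $\cos(\theta/\widetilde{\alpha})<0$ the resulting bound has the wrong sign, so it cannot yield $-c'n|z|^{1/\widetilde{\alpha}}$. You flag this as ``the principal technical obstacle,'' but it is in fact the heart of the lemma and remains unresolved in your sketch.

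The paper's resolution is a different choice of domain for the two-constants theorem: a cone $M'$ of half-angle $\alpha'\pi/2$, with $\beta<\alpha'<\alpha$ and $(\alpha')^{2}>\alpha\beta$, translated by $w>0$ so that its vertex lies \emph{outside} $K$ and $M'$ contains a full disc around $0$ of radius $\asymp w$. The far circular arc of $\partial M'$ lies in $V$, where $\log|Q|\leq\log C-cn$; the lateral rays $\gamma$ leave $K$ near the translated vertex, and there Bernstein's inequality supplies the controlled loss $\log|Q|\leq\log C+C'n(\widetilde{c}w)^{1/\beta}$, which is exactly the origin of the $+c''n|z|^{1/\beta}$ term in the statement. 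A module-of-curve-families estimate then gives $m_{\partial M'\setminus\gamma}(z)\succcurlyeq w^{\alpha/(\alpha')^{2}}$ uniformly for $|z|\leq\frac12 d(0,\partial M')$, i.e.\ on a full disc about $0$, and taking $w\asymp|z|$ yields the claim with $\widetilde{\alpha}=(\alpha')^{2}/\alpha$. If you want to salvage your version, replace $U$ by such a translated cone (or otherwise arrange that the domain of the two-constants estimate contains a neighborhood of $0$, paying for the portion of its boundary outside $K$ via Bernstein); the comparison with $\mathrm{Re}\,z^{1/\widetilde{\alpha}}$ inside a proper subcone cannot, by itself, reach the points where the conclusion is actually needed.
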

      
      \begin{proof} Choose $\alpha'$ such that $\beta < \alpha' < \alpha$ which satisfies in addition $(\alpha')^{2}>\alpha \beta$, and consider a cone which is essentially a shifted to the left (by some $w >0$) cone with $\partial M:= \{re^{\pm i \alpha' \pi/2}, 0 \leq r \leq r_{0}\} \cup \{r_{0}e^{ i \phi}, -\alpha' \pi/2 \leq \phi \leq \alpha' \pi/2\}$, where $r_{0} < R$.
      	Since the set $V$ in the statement of the lemma contains some disc, we may assume that the estimate 
      	$$
      	|Q(z)| \leqslant C e^{-cn}
      	$$
      	holds for all $z$ such that  $z + w = r_{0}e^{ i \phi}, -\alpha' \pi/2 \leq \phi \leq \alpha' \pi/2$.
      	(in other words, for the points of the arc of the cone $M'$ obtained by shifting $M$ as described above). By $\gamma$ we denote the linear subarcs of the boundary of $M'$.
      	
      	The mapping $\Phi$ satisfies
      	$$
      	|\Phi(z)| - 1 \leqslant C' |z|^{1/\beta}, \;\;\; |z|<1,
      	$$ 
      	and, according to the Bernstein's inequality, for $z \in \gamma \setminus K$ 
      	$$
      	|Q(z)| \leqslant C |\Phi(z)|^{n} \leqslant C (1 + C'|z|^{1/\beta})^{n} \leqslant C \exp(C'n|z|^{1/\beta}).
      	$$   
      	Also we see that for all $z \in \gamma \setminus K$
      	$$
      	|z| \leqslant \widetilde{c} w,
      	$$
      	where $\widetilde{c} = \widetilde{c}(\alpha, \alpha')$
      	
      	Then the estimate 
      	\begin{align}
      		\log|Q(z)| &\leqslant \log(C\exp(C'n(\widetilde{c} w)^{1/\beta})) m_{\gamma}(z) + \log( C e^{-cn})m_{\partial M' \setminus \gamma}(z) \\&= C'\widetilde{c}^{1/\beta}n  w^{1/\beta} m_{\gamma}(z)  - cn \; m_{\partial M' \setminus \gamma}(z) + \log(C),
      	\end{align}
      	holds, where $m_{\gamma}(z)$ and $m_{\partial M' \setminus \gamma}(z)$ are harmonic measures of the corresponding arcs at point $z$.
      	
      	We will estimate $m_{\partial M' \setminus \gamma}(z)$ from below assuming $|z| \leqslant \frac{1}{2} d(0, \partial M')$ (we also have here $d(0, \partial M') \asymp w$).
      	For this we estimate the module of family $\Gamma$ of curves separating $z$ from the arc $m_{\partial M' \setminus \gamma}(z)$. We refer the reader to (\cite{Blatt}, Appendix A) for the definition and main properties of the module of curve family that we use further.
      	Consider a function $\psi$ which maps $M'$ conformally to the unit disc and satisfies $\psi(z) = 0$. In this case $\psi(\Gamma)$ is a family of curves separating $0$ from the subarc $J$ of the unit circle that satisfies $|J| = 2 \pi m_{\partial M' \setminus \gamma}(z)$.
      	As it is shown in the Example 1.11 (\cite{Blatt}, Appendix A),
      	$$
      	\frac{1}{\pi} \log\frac{2}{|J|} \leqslant m(\psi(\Gamma)) \leqslant 2 + \frac{1}{\pi} \log \frac{4}{|J|}
      	$$
      	
      	On the other hand, we estimate $m(\Gamma)$ from above as follows. 
      	Let $d:=d(z, \partial M')$.
      	Consider the metric
      	
      	\begin{gather}
      		\rho(\zeta)=\begin{cases}                                                              
      			\frac{1}{\alpha' \pi |\zeta -z|},&\text{if $e^{- \alpha' \pi} d\leqslant|\zeta - z|\leqslant c$, \;\;$\zeta \in M'$}\\                             
      			0,&\text{otherwise}                   
      		\end{cases}                                                                   
      	\end{gather}
      	
      	Then for any $\gamma \in \Gamma$ we have 
      	$$
      	\int_{\gamma} \rho(\zeta) |d\zeta| \geqslant 1,
      	$$
      	and
      	$$
      	m(\Gamma) \leqslant \int_{\Cc} \rho^{2}(\zeta) dm(\zeta) \leqslant \frac{\alpha}{(\alpha')^{2} \pi}\ln\bigg(\frac{c_{1}}{d}\bigg) + c_{2},
      	$$
      	
      	Thus, combining the estimates we obtain
      	$$
      	\frac{1}{\pi} \log\frac{2}{|J|} \leqslant m(\psi(\Gamma)) = m(\Gamma) = \frac{\alpha}{(\alpha')^{2} \pi}\ln\bigg(\frac{c_{1}}{d}\bigg) + c_{2}
      	$$
      	which implies
      	$$
      	|J| \curlyeqsucc d^{\alpha/(\alpha')^{2}} \curlyeqsucc w^{\alpha/(\alpha')^{2}}.
      	$$
      	
      	Consequently, together with estimate $m_{\gamma}(z) \leqslant 1$ we obtain
      	$$
      	\log|Q(z)| \leqslant c_{1} n  w^{1/\beta}  - c_{2}n \; w^{\alpha/(\alpha')^{2}} + \log(C),
      	$$
      	which finishes the proof of the lemma.
      \end{proof}
      
      To finish the proof of the theorem, assume that there exists a sequence of almost optimal polynomials $P_{n}$ satisfying
      $$
      \|f - P_{n}\|_{\overline{V}} \leq C E_{n}(f) e^{-cn}
      $$
      for some open $V \subset \ir K$.
      Then, by Lemma \ref{EstimPol},
      $$
      |P_{n}(z) - P_{n-1}(z)| \leqslant C \exp(-c'n|z|^{\alpha/(\alpha')^{2}} + c''n|z|^{1/\beta}),  \;\;\;\;\;|z|\leqslant \varepsilon.
      $$
      Since $(\alpha')^{2} > \alpha \beta$, there exists some $\varepsilon' <  \varepsilon$ such that
      
      $$
      |P_{n}(z) - P_{n-1}(z)| \leqslant C \exp(-c_{3}n),  \;\;\;\;\;|z|\leqslant \varepsilon'.
      $$
      
      Thus, $P_{n}$ converge to some analytic function in $|z| \leqslant \varepsilon'$, that is a contradiction, since $f$ is not analytic at $z=0$. 
      
      \section{Approximation with rational functions}
   
  Theorem \ref{polygon} follows directly from the next result.
\begin{theorem}
	Consider $A_{\Theta}:=\{z \in \Cc: |z| < 1, -\Theta < \arg z < \Theta \}$ and let $\Theta \in (0, \pi/2)$ be fixed. Let $f$ be a bounded analytic function in the slit disc $A_{\pi}$ that satisfies $f(z) = O(|z|^{\delta})$ as $z \ra 0$ for some $\delta >0$.  Then, for $\rho \in (0,1)$, there exist rational functions $\{r_{n}\}$, $1 \leq n < \infty$, such that 
	$$
	\|f - r_{n}\|_{\overline{M}} = O(e^{-C\sqrt{n}}), 
	$$   
	as $n \ra \infty$ for some $C>0$, where $M = \rho A_{\Theta}$,
	and
	$$
	\underset{n \ra \infty}{\overline{\lim}} \|f - r_{n}\|_{E} \;e^{cn} =0,
	$$
	for any compact $E \subset M$, with $c=c(E)$.
\end{theorem}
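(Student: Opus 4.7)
The plan is to compose the Gopal--Trefethen rational approximant with a polynomial correction built from the fast-decreasing polynomials, in the spirit of the proof of Theorem \ref{th1}. First I would invoke the construction of \cite{Tr} on the slit disc $A_\pi$ to produce, for each $n$, a rational function $\widetilde{r}_n$ of degree at most $\lfloor n/2 \rfloor$ whose poles are exponentially clustered at the corner $z=0$ along the slit $(-1,0)$. Under the hypotheses that $f$ is bounded and analytic in $A_\pi$ with $f(z)=O(|z|^\delta)$ at the corner, their argument yields
$$
\|f-\widetilde{r}_n\|_{\overline{A_\Theta}}\leq C_1\,e^{-C_2\sqrt{n}}.
$$
Since the poles of $\widetilde{r}_n$ lie off $\overline{M}$, the remainder $g_n:=f-\widetilde{r}_n$ belongs to $A(\overline{M})$ and satisfies $\|g_n\|_{\overline{M}}=O(e^{-C_2\sqrt{n}})$.

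Because $\Theta<\pi/2$, the set $M=\rho A_\Theta$ is a compact convex set with nonempty interior, so I would run the integral construction from the proof of Theorem \ref{th1} essentially verbatim with $g_n$ in place of $f$, producing a polynomial $P_n$ of degree at most $\lfloor n/2 \rfloor$ via a Cauchy-type integral over $\partial M$ with a kernel combining Dzjadyk's $K_{[n/4]}$ and the rapidly decreasing polynomials $R_{[n/4]}$ of (\ref{fastPol}). The same estimates used there give, with constants depending only on $M$ and the compact $E\subset\ir M$,
$$
\|g_n-P_n\|_{\overline{M}}\preccurlyeq\|g_n\|_{\overline{M}},\qquad \|g_n-P_n\|_E\preccurlyeq\|g_n\|_{\overline{M}}\,e^{-c(E)\,n}.
$$
Setting $r_n:=\widetilde{r}_n+P_n$, which is rational of degree at most $n$, the two bounds combine to $\|f-r_n\|_{\overline{M}}=O(e^{-C_2\sqrt{n}})$ for the first conclusion, and $\|f-r_n\|_E\preccurlyeq e^{-C_2\sqrt{n}-c(E)\,n}$ on $E$, from which $\overline{\lim}\,\|f-r_n\|_E\,e^{c\,n}=0$ follows immediately with $c=c(E)$.

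The main obstacle I anticipate is that Theorem \ref{th1} is stated for a fixed analytic function, whereas here its underlying construction must be applied to the sequence $g_n$ and the resulting bounds summed; this requires verifying that the constants in the integral estimates depend only on the geometry of $M$ and on $E$, not on the analytic input. Inspecting the proof of Theorem \ref{th1}, the bounds $|I_\gamma|\preccurlyeq 1$ and the telescoping over the annular arcs $\tau_s$ only use a pointwise bound on $|g_n|$ along $\partial M$, together with properties of $R_{[n/4]}$ and $K_{[n/4]}$, so this uniformity does hold. A secondary technicality is ensuring that the Gopal--Trefethen rate is valid uniformly on all of $\overline{A_\Theta}$ (and hence on $\overline{M}$), which should follow from the standard Newman-type analysis once the exponentially clustered poles are placed on the exterior bisector at the corner.
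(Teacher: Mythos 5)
Your two-stage argument is correct, but it is organized differently from the paper's proof. The paper does not first invoke the Gopal--Trefethen approximant and then correct it; instead it builds a single Cauchy-type integral over the slit $[-1,0]$ with the combined kernel $q_n(\zeta,z)=\frac{1}{\phi(\zeta)}\frac{\phi(\zeta)-\phi(z)}{\zeta-z}+\frac{\phi(z)}{\phi(\zeta)}\frac{1-R_n(\zeta,z)}{\zeta-z}$, so that the error is the single integral $\frac{1}{2\pi}\int\frac{\phi(z)}{\phi(\zeta)}\frac{f(\zeta)}{\zeta-z}R_n(\zeta,z)\,d\zeta$; the root-exponential bound on $\overline{M}$ then comes from splitting the slit at $|\zeta|=\varepsilon=e^{-\sigma(n-1)/\sqrt{n}}$ and redoing the Newman-type product estimates, while the geometric decay on compact $E\subset M$ comes from the factor $R_n(\zeta,z)$ in that same formula. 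No Dzjadyk kernel and no integration over $\partial M$ appear in the paper's Section 5. Your route — treating the Gopal--Trefethen bound on $\overline{A_\Theta}$ as a black box from \cite{Tr}, observing that $g_n=f-\widetilde{r}_n$ lies in $A(\overline{M})$ because the poles sit on $(-\infty,0)$ and $\Theta<\pi/2$ makes the sector $\overline{M}$ convex, and then applying the Theorem \ref{th1} operator to $g_n$ — is sound, and you correctly identify the one point that needs checking, namely that the operator bounds $\|g-T_ng\|_{\overline{M}}\preccurlyeq\|g\|_{\overline{M}}$ and $\|g-T_ng\|_{E}\preccurlyeq\|g\|_{\overline{M}}e^{-c(E)n}$ depend only on the geometry of $M$ and $E$ and are homogeneous in the input; that is indeed what the estimates of $I_\gamma$ and of the integrals over the $\tau_s$ deliver. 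What your version buys is modularity: it would apply verbatim with any other corner approximant in place of $\widetilde{r}_n$ and makes the passage to Theorem \ref{polygon} transparent. What the paper's version buys is self-containment and explicit control of the pole locations of $r_n$ (exactly the clustered $\beta_j$, with the correction term sharing those poles rather than being a separate polynomial), at the cost of reproving the root-exponential estimate inside the combined construction.
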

 
\begin{proof}
	Let 
	\begin{equation*}
		\phi(z) := \prod^{n-1}_{j=0} (z - \alpha_{j}) / \prod^{n-1}_{j=0} (z - \beta_{j}),
	\end{equation*}
	where we define poles $\beta_{j}$ by
	$$
	\beta_{j}:=-e^{\ \sigma j/\sqrt{n}}, \;\;\; 0\leq j \leq n-1,
	$$ 
	with some fixed $\sigma>0$, and interpolation points $\alpha_{j}$ by 
	$$
	\alpha_{0} =0, \;\;\; \alpha_{j} = -\beta_{j}, \;\;\; 1 \leq j \leq n-1.
	$$
	
	Let $\Gamma$ be a boundary of the slit disc $A_{\pi}$. We will approximate the Cauchy integral of the function $f$ over the contour $\Gamma$.
	It is enough to approximate the integral over $[-1,0]$ only, since the integral over the circular part, by Runge's theorem \cite{Runge1}, can be approximated with geometric rate on $\overline{M}$.
	Set
	$$
	r_{n}(z):= \frac{1}{2 \pi} \int_{[-1,0]} q_{[\frac{n}{2}]}(\zeta,z) f(\zeta) d \zeta, 
	$$
	where
	$$
	q_{n}(\zeta,z) := \frac{1}{\phi(\zeta)}\frac{\phi(\zeta) - \phi(z)}{\zeta-z} + \frac{\phi(z)}{\phi(\zeta)} \frac{1- R_{n}(\zeta,z)}{\zeta-z}
	$$
    is a rational function of order at most $2n$.

For $\varepsilon = \exp(- \sigma (n-1)/\sqrt{n})$, split $[-1,0]$ into two pieces,
$$
\Gamma_{\varepsilon}:= \{\zeta \in \Gamma: |\zeta|< \varepsilon\}, \;\;\; 
\Gamma_{1} = [-1,0] \setminus \Gamma_{\varepsilon}
$$

Then we get
\begin{align*}
f(z) - r_{2n}(z) &= \frac{1}{2 \pi} \int_{\Gamma_{\varepsilon}} \frac{\phi(z)}{\phi(\zeta)} \frac{f(\zeta)}{\zeta -z} R_{n}(\zeta,z)  d\zeta \\
& + \frac{1}{2 \pi} \int_{\Gamma_{1}} \frac{\phi(z)}{\phi(\zeta)} \frac{f(\zeta)}{\zeta -z} R_{n}(\zeta,z)  d\zeta =  I_{\varepsilon} + I_{1}.
\end{align*}
To bound $I_{\varepsilon}$, first notice that for $\zeta \in \Gamma_{\varepsilon}$, we have $|z| \leq |\zeta - z|$.
Also the estimates $|z-\alpha_{0}|/|z-\beta_{0}| \leq |z|$ and $|z - \alpha_{j}|/|z - \beta_{j}|$, $j \geq 1$ imply
\beq \label{phiBound}
|\phi(z)| \leq |z|, \;\;\; z \in A_{\Theta}. 
\eeq
Thus,
$$
\frac{|\phi(z)|}{|\zeta -z|} \leq 1.
$$
To bound $f(t)/\phi(t)$, over $\Gamma_{\varepsilon}$, first observe that
$$
|\phi(\zeta)| \geq |\zeta|, \;\;\; \zeta \in \Gamma_{\varepsilon},
$$
since $|\zeta - \alpha_{0}|/ |\zeta - \beta_{0}| \geq |\zeta|$, and $|\zeta - \alpha_{j}|/ |\zeta - \beta_{j}| \geq 1$ for $j \geq 1$.
Therefore, taking into account the behavior of $f(z)$ as $z \ra 0$, it's enough to bound the integral
$$
\int_{0}^{\varepsilon} \frac{t^{\delta}}{t} dt =  \frac{\varepsilon^{\delta}}{\delta}
$$
Since $\varepsilon = \exp(- \sigma (n-1)/\sqrt{n})$, the integral is of order $\exp(-\sigma \delta \sqrt{n})$.

By (\ref{phiBound}), the integral over $I_{1}$ is root-exponentially small for $z \in A_{\Theta}$ with $|z| < 2 \varepsilon$.
For $z \in A_{\Theta}$ with $|z| \geq 2 \varepsilon$, it is enough to show that order $\sqrt{n}$ of factors $|z - \alpha_{j}|/ |z -\beta_{j}|$ is bounded by some constant $A <1$ that is independent of $z$. 
For each $z$, choose the factors satisfying $|z|/2<\alpha_{j}<|z|$. It's easy to see that the number of these factors grows in proportion to $\sqrt{n}$, $n \ra \infty$ and that each factor is bounded by some constant $A<1$, depending on $\Theta$ but independent of $z$.
Moreover, for the points $z \in M$ the approximants converge with geometric rate, due to the properties (\ref{ineq}) of the polynomials $R_{n}$.

\end{proof}

	\end{document}